\numberwithin{equation}{section}
\theoremstyle{plain}
\newtheorem{theorem}{Theorem}[section]
\newtheorem{proposition}[theorem]{Proposition}
\newtheorem{lemma}[theorem]{Lemma}
\theoremstyle{definition}
\newtheorem{remark}[theorem]{Remark}
\title{Liouville type theorem for a class quasilinear $p$-Laplace type equation on the sphere}
\begin{document}
\author{Daowen Lin}
\address{School of Mathematical Sciences, University of Science and Technology of China, Hefei, Anhui Province, P. R. China, 230006}
\email{lindw@mail.ustc.edu.cn}
\author{Xi-Nan Ma}
\address{School of Mathematical Sciences, University of Science and Technology of China, Hefei, Anhui Province, P. R. China, 230006}
\email{xinan@ustc.edu.cn}
\maketitle
\begin{abstract}
	We use the integral by parts  to get a Liouville type theorem for a class quasilinear $p$-Laplace type equation on the sphere, this $p$-Laplace type  equation  arises from the study of asymptotic behaviour near  the origin  for the semilinear $p$-Laplace  equation on the puncture ball  $B_1(o) \subset \mathbb{R}^n$. This  give a positive answer to L. V\'{e}ron's question in a paper \cite{Veron92}  and his book \cite{Veron} at page 440. 
\end{abstract}
\section{Introduction}
In Gidas-Spruck\cite{GS81}, they studies the Liouville type theorem for the nonnegative solution on the following semilinear elliptic  equation
\begin{equation}\label{1.1a}
	\Delta u+ u^q=0\,\,\,\,\,\textrm{in}\,\qquad \mathbb{R}^{n+1}
\end{equation} 
in the range of $1 < q < 2^* - 1$ where $2^* = \frac{2(n+1)}{n-1}$, and they obtained the unique solution is trivial solution.   Gidas-Spruck\cite{GS81}  proved their results via the method of vector fields and integral by parts motivated by Obata identity \cite{Obata71}.  

In order to study the asymptotic behaviour near  the origin  for the above  equation \eqref{1.1a} on the puncture ball  $B_1(o)\setminus \{o\} \subset R^{n+1}$,  their studied the following equation on sphere $S^n$. 
\begin{equation}\label{1.2}
	\Delta u+ u^q- \lambda u=0\,\,\,\,\,\textrm{in}\,\qquad \mathbb{S}^{n},
\end{equation} 
and their also got a Liouville type theorem under certain condition on $q, \lambda $. In a more genaral setting, Veron-Veron \cite{Veron91} got the  following theorem.
\begin{theorem}(V\'{e}ron-V\'{e}ron \cite{Veron91} )\label{Thm0.1}
	Assume $(M,g)$ is a compact Riemannian manifold without boundary of dimension $n\ge 2$, $	\Delta $ is the Laplace-Beltrami operator on $M$, $q>1, \lambda>0$ and $u$ 
	is a positive solution of 
	\begin{equation}\label{1.3}
		\Delta u+ u^q- \lambda u=0\,\,\,\,\,\textrm{on}\,  \qquad\mathbb{M}^{n}.
	\end{equation} 
	Assume also that the spestrum $\sigma(R(x))$ of the Ricci tensor $R$ of the metric $g$ satisfies 
	\begin{equation}\label{1.4}
	{\inf}_{x\in M}\min \sigma(R(x)) \ge \frac{n-1}{n}(q-1)\lambda,\\\quad 
	q\le \frac{n+2}{n-2}.
	\end{equation} 
	Moreover, assume that one of the two inequalities is strict if $(M,g)$
is conformally diffeomorphim to $(\mathbb{S}^n,g)$. Then $u$ is constant with the value ${\lambda}^\frac{1}{q-1}$.	
\end{theorem}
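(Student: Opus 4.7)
The plan is to adapt the Gidas--Spruck vector-field/Bochner method, as developed in \cite{GS81} and extended via Bidaut-V\'eron, to the curved Riemannian setting. The strategy is to derive a weighted integral identity of Obata type in which the main positive term is $\int_M u^{-a}\bigl|\nabla^2 u - \tfrac{\Delta u}{n}g\bigr|^2\,dV$ (controlled by Newton's inequality), and show that under the stated hypotheses every remaining term has a favourable sign, forcing the traceless Hessian to vanish.

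First, I would multiply \eqref{1.3} by various weights $u^{s}$ and integrate by parts on the closed manifold $M$ to obtain preliminary relations between $\int_M u^{s-1}|\nabla u|^2$, $\int_M u^{s+q}$ and $\int_M u^{s+1}$. Second, I would apply the Bochner identity
\[
\tfrac{1}{2}\Delta|\nabla u|^2 = |\nabla^2 u|^2 + \langle\nabla u,\nabla \Delta u\rangle + \mathrm{Ric}(\nabla u,\nabla u),
\]
multiply by the weight $u^{-a}$ (with $a$ to be fixed), integrate over $M$, and use $\Delta u = \lambda u - u^q$ together with $\nabla \Delta u = (\lambda - q u^{q-1})\nabla u$. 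This rewrites $\int_M u^{-a}|\nabla^2 u|^2$ as a combination of three types of terms: a curvature integral $\int_M u^{-a}\mathrm{Ric}(\nabla u,\nabla u)$, a nonlinear integral $\int_M u^{-a+q-1}|\nabla u|^2$, and a quartic gradient integral $\int_M u^{-a-2}|\nabla u|^4$.

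Third, I would use the inequality $|\nabla^2 u|^2 \geq (\Delta u)^2/n$ to split off the nonnegative traceless-Hessian piece, and then select $a = a(n,q)$ so that the coefficient of $\int_M u^{-a+q-1}|\nabla u|^2$ precisely reflects the quantity $\tfrac{n-1}{n}(q-1)\lambda$ appearing in \eqref{1.4}, while the quartic gradient term is handled by a further integration by parts against $\nabla u$ to eliminate it (or absorb it with the correct sign). The algebraic condition that this elimination produces a nonnegative remainder is exactly $q\le\tfrac{n+2}{n-2}$, which is how the Sobolev exponent enters. Combined with the Ricci lower bound in \eqref{1.4}, one arrives at
\[
\int_M u^{-a}\Bigl|\nabla^2 u - \tfrac{\Delta u}{n}g\Bigr|^2\,dV + \int_M u^{-a}\Bigl[\mathrm{Ric}(\nabla u,\nabla u) - \tfrac{n-1}{n}(q-1)\lambda|\nabla u|^2\Bigr]dV \le 0,
\]
forcing both integrands to vanish. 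If either inequality in \eqref{1.4} is strict, one concludes directly that $\nabla u \equiv 0$, hence $u \equiv \lambda^{1/(q-1)}$.

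Fourth, the residual equality case requires a separate argument: when $q=\tfrac{n+2}{n-2}$ and $\mathrm{Ric}\equiv\tfrac{n-1}{n}(q-1)\lambda g$, the vanishing of the traceless Hessian of some power of $u$ produces a nontrivial conformal vector field on $(M,g)$, which by a theorem of Obata (or Tashiro) is possible only if $(M,g)$ is conformally the round sphere; the theorem's hypothesis excludes this, so again $u$ is constant. The main obstacle will be the bookkeeping in Step~3: the integration-by-parts cancellations that eliminate the quartic gradient term and convert the nonlinear term into the exact constant $\tfrac{n-1}{n}(q-1)\lambda$ are extremely sensitive to the choice of $a$, and it is this sharp matching---forced simultaneously by the Sobolev exponent and the Ricci constant---that makes the hypotheses in \eqref{1.4} sharp.
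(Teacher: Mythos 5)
The paper does not actually prove this statement: Theorem~\ref{Thm0.1} is quoted verbatim from Bidaut-V\'eron--V\'eron \cite{Veron91} as background, and the paper's own computations (Sections~2--3) concern the quasilinear analogue \eqref{1.6}. So there is no in-paper proof to compare against; what can be said is that your outline is the standard Gidas--Spruck/Obata argument that \cite{Veron91} carries out and that this paper then generalizes to the $p$-Laplacian. Your steps match that template: the weighted Bochner identity, the split $|\nabla^2 u|^2=|\nabla^2u-\tfrac{\Delta u}{n}g|^2+\tfrac{(\Delta u)^2}{n}$, a second multiplication of the equation by $|\nabla u|^2$ times a power of $u$ to produce a relation for the quartic gradient term (this is exactly the role of \eqref{2.9} in the paper's quasilinear version), a choice of exponent making the quartic coefficient vanish --- which is where $q\le\tfrac{n+2}{n-2}$ enters as a discriminant condition --- and Obata rigidity in the borderline case. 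Two cautions on your sketch. First, the quartic term $\int u^{-a-2}|\nabla u|^4$ is not ``eliminated by a further integration by parts''; it is eliminated (or given a good sign) only for the right value of the free exponent, and verifying that the admissible range of that exponent is nonempty precisely when $q\le\tfrac{n+2}{n-2}$ is the entire analytic content of the proof --- you have deferred it rather than done it. Second, in the equality case you must check that the vanishing traceless Hessian applies to a suitable \emph{power} of $u$ (not $u$ itself) so that Obata's theorem yields the conformal diffeomorphism to the round sphere; with that understood, the exclusion hypothesis closes the argument as you say. As a proposal the strategy is correct and is the same one the paper's own Theorem~\ref{Them1} extends, but it is an outline of \cite{Veron91} rather than a self-contained proof.
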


For  the following semilinear $p$-Laplace  equation
\begin{equation}\label{1.5}
	\Delta_p u+ u^q=0\,\,\,\,\,\textrm{in}\,\qquad \mathbb{R}^{n+1}.
\end{equation} 
In Serrin-Zou\cite{SZ02}, they got a Liouville type theorem for the nonnegative solution of equation \eqref{1.5}, 
in the range of  $1<p<n+1$ and $p-1 < q < p^* - 1$ where $p^* = \frac{(n+1)p}{n+1-p}$,  they got the unique solution is trivial solution.

In order to study the asymptotic behaviour near  the origin  for the above  equation \eqref{1.5} on the puncture ball  $B_1(o)\setminus \{o\} \subset \mathbb{R}^{n+1}$. As in Gidas-Spruck\cite{GS81}, V\'{e}ron \cite{Veron}  made the following observation. With the spherical coordinate $(r,\sigma)$, separable solutions of \eqref{1.5} under the form $u(x)=u(r,\sigma)=r^{-\alpha}\omega(\sigma)$ exist, then  $\omega$ satisfies
\begin{equation}\label{1.6}
	div((\alpha_{p,q}^2\omega^2+|\nabla \omega|^2)^{\frac{p-2}{2}}\nabla \omega)+|\omega|^{q-1}\omega-\lambda_{p,q}(\alpha^2_{p,q}\omega^2+|\nabla \omega|^2)^{\frac{p-2}{2}}\omega=0\quad \textrm{on}\,\qquad\mathbb{S}^n,
\end{equation}
where $$\lambda_{p,q}=\alpha_{p,q}(n+1-\alpha_{p,q}q), \qquad \alpha=\alpha_{p,q}=\frac{p}{q+1-p},$$ $div$ and $\nabla$ are operators  under the canonical metric on $\mathbb{S}^n$.

If $\lambda_{p,q}<0$, i.e. $p-1<q\leq\frac{(n+1)(p-1)}{n+1-p}$; integrating equations \eqref{1.6} shows that there exists no nontrivial solution to \eqref{1.6}.

For $q=\frac{np-n+p}{n-p}$, which is the Sobolev critical exponent, it is well known  that \eqref{1.5} admits nonconstant solutions. 

For $\lambda_{p,q}>0$, $\frac{(n+1)(p-1)}{n+1-p}<q<\frac{np-n+p}{n-p},$  in a paper \cite{Veron92} and his book\cite{Veron} at page 440,  L. V\'{e}ron asked if all \textbf{positive} solutions of \eqref{1.5} are constant. In this paper we confirm it:
\begin{theorem}\label{Them1}
 For $1<p<n$ and $\frac{(n+1)(p-1)}{n+1-p}<q<\frac{np-n+p}{n-p}$ with $\alpha_{p,q}=\frac{p}{q+1-p}$ and $\lambda_{p,q}=\alpha_{p,q}(n+1-\alpha_{p,q}q)$, any positive solution to \eqref{1.6} is constant.
\end{theorem}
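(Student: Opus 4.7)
The plan is to follow the Obata--V\'eron integration-by-parts strategy, adapted to the quasilinear weight $W := (\alpha^2\omega^2+|\nabla\omega|^2)^{(p-2)/2}$, with which \eqref{1.6} reads
\[
\mathrm{div}(W\nabla\omega) + \omega^q - \lambda W\omega = 0. \qquad (\ast)
\]
Since $\mathbb{S}^n$ is closed and $\omega$ is positive, divergence theorems are unobstructed and we may freely multiply by arbitrary powers of $\omega$.

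First I would assemble a family of integral identities by testing $(\ast)$ against powers $\omega^s$ and against $W\omega^s$ for a free parameter $s$, and integrating by parts on $\mathbb{S}^n$. These yield relations among the quantities
\[
\int_{\mathbb{S}^n} \omega^{q+s}\,d\sigma,\qquad \int_{\mathbb{S}^n} W\,\omega^{s+1}\,d\sigma, \qquad \int_{\mathbb{S}^n} W\,\omega^{s-1}|\nabla\omega|^2\,d\sigma,
\]
and analogous ones in which $A^{p/2}$ replaces $W\omega^2$. These form the algebraic building blocks.

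The core of the argument is a weighted Bochner--Weitzenb\"ock identity, obtained by computing the divergence of a vector field of the shape
\[
V := W\bigl[(\nabla^2\omega)(\nabla\omega) - \tfrac{1}{2}\nabla|\nabla\omega|^2\bigr] + c_1\,\omega\,W\nabla\omega + c_2\,\omega^q\nabla\omega,
\]
with constants $c_1,c_2$ to be tuned. Using $\mathrm{Ric}_{\mathbb{S}^n}=(n-1)g$ and eliminating $\Delta\omega$ via $(\ast)$, the divergence theorem $\int_{\mathbb{S}^n}\mathrm{div}\,V=0$ produces an identity involving $|\mathrm{Hess}\,\omega|^2$, the Ricci contribution $(n-1)|\nabla\omega|^2$, the nonlinear piece $\omega^{q-1}|\nabla\omega|^2$, and cross-terms coupling $W$, $\omega$, $\nabla\omega$, and $\mathrm{Hess}\,\omega$. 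Combining this with the preliminary identities and optimizing $c_1,c_2,s$, I expect to reach an inequality of the form
\[
\int_{\mathbb{S}^n} W\,\omega^{s}\,\bigl|\mathrm{Hess}\,\omega - \tfrac{1}{n}(\Delta\omega)g\bigr|^2\,d\sigma + C(n,p,q,\lambda)\int_{\mathbb{S}^n} W\,\omega^{s-2}|\nabla\omega|^4\,d\sigma \;\le\; 0,
\]
where the coefficient $C(n,p,q,\lambda)$ is nonnegative exactly in the subcritical range $q\le \frac{np-n+p}{n-p}$ and strictly positive under strict inequality. Both integrands must then vanish, forcing $\mathrm{Hess}\,\omega$ to be pure trace; substituting back into $(\ast)$ and using the Obata rigidity on $\mathbb{S}^n$ pins $\omega$ down to a constant.

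The main obstacle I expect is purely algebraic. Because $W=A^{(p-2)/2}$ fuses $\omega$ and $|\nabla\omega|^2$ inside a single nonlinear factor, the weighted Bochner computation does not separate cleanly into Hessian, Ricci, and nonlinear pieces as it does when $p=2$. Choosing the vector field $V$, the multipliers and the exponent $s$ so that the surplus terms collapse into a manifest sum of squares--and, simultaneously, so that the threshold exponent at which the coefficient $C$ changes sign turns out to be exactly the Sobolev critical exponent $\frac{np-n+p}{n-p}$--is where the real work lies, and where the precise ranges $1<p<n$ and $\lambda_{p,q}>0$ enter the estimate.
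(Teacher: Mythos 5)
Your overall plan---an Obata/Gidas--Spruck integration by parts with tunable multipliers---is indeed the family of ideas the paper uses, but as written the proposal has a genuine gap at its central step and misses the two ingredients that make the case $p\neq 2$ work. The quantity that actually emerges from the computation is not $\bigl|\mathrm{Hess}\,\omega-\tfrac1n(\Delta\omega)g\bigr|^2$: since the weight $W$ cannot be pulled past the derivative, the natural tensor is $E^i_j=\nabla_j(W\nabla_i\omega)-\tfrac1n\,\mathrm{div}(W\nabla\omega)\,g_{ij}$, which for $p\neq2$ is \emph{not symmetric} (the term $W_{,j}\,\omega_i$ breaks the symmetry), so the ``square'' $E^i_jE^j_i=\mathrm{trace}(E^2)$ produced by the identity is not manifestly nonnegative. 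Your step ``both integrands must then vanish'' therefore has no justification as stated; this is precisely where the paper invokes the matrix lemma of \cite{AKM18,CM18}, factoring the relevant tensor as $AB$ with $A$ symmetric positive definite and $B$ symmetric, so that $\mathrm{trace}\bigl((AB)^2\bigr)\ge0$ with equality iff $AB=0$. (A smaller but real slip: $(\nabla^2\omega)(\nabla\omega)=\tfrac12\nabla|\nabla\omega|^2$, so the first bracket in your vector field $V$ is identically zero.)

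The second missing ingredient is structural. The paper does not work with $\omega$ directly: it substitutes $\omega=v^{-\beta}$ and uses three parameters $(\beta,a,\varepsilon)$, where $\varepsilon$ deforms the trace-free part ($F=E+\varepsilon(\mathrm{div}X)g$) and an additional trace-free tensor $L^i_j$ built from $v_iv_j/v$ is introduced so that a square $(F+ML)$ can be completed. The three parameters are then chosen to annihilate three coefficients, and the solvability of the last of these equations rests on the nontrivial fact that the discriminant of a certain quadratic in $\beta$ vanishes identically in the stated range---meaning the coefficient of the quartic gradient term can be made exactly zero, but not strictly positive. So your expectation that a coefficient $C(n,p,q,\lambda)$ comes out strictly positive in the subcritical range is not what happens; the final rigidity comes instead from the Ricci term after taking the trace of $F+ML=0$, which yields $\mathrm{div}(W\nabla v)=0$ and then $|\nabla v|=0$ via $\mathrm{Ric}=(n-1)g$. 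Without the power substitution, the correction tensor $L$, and the matrix lemma, the scheme cannot be closed; these, rather than routine bookkeeping, are the real content of the proof.
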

\begin{remark}
	Using the similar computation, our proof also work on the closed Riemannian manifold $(M,g)$ with $Ric_{ij}\geq(n-1)g_{ij}$.
\end{remark}

 We give some reviews on the this related subject. Based on the technique developed in Dolbeault-Esteban-Loss\cite{DEL14},  Dolbeault-Esteban-Loss\cite{DEL16} finally solve the famous problem of the characterization of the optimal symmetry breaking region in Caffarelli-Kohn-Nirenberg inequalities\cite{CKN84} completely. As in Gidas-Spruck \cite{GS81},  Ma-Ou \cite{MO} get similar Liouville results  on Heisenberg group $\mathbb{H}^n$.  
 Inspired by Serrin-Zou\cite{SZ02}, Ciraolo-Figalli-Roncoroni\cite{CFR20} classify the positive energy finite solutions to \eqref{1.5} when $q=\frac{(n+1)p}{n+1-p}-1$ in convex cones with the help of some  prior estimates. For the $p$ version Caffareli-Kohn-Nirenberg inequalities, there are some partial results in Ciraolo-Corso\cite{CC22}. 
See the recent result for the critical $p$-Laplace equation in $\mathbb{R}^n$ by Q.Ou \cite{Ou22}.   In Ciraolo-Figalli-Roncoroni\cite{CFR20}, 
 an important Lemma  by \cite{AKM18} and \cite{CM18} for the research of $p$-Laplacian equations has been used.

In the above  $p$-Laplace equation papers, the authors  always intruduce only one parameter to get their result.
 In the proof of our theorem \ref{Them1}, we introduce three parameters and use the  Lemma from \cite{AKM18} or \cite{CM18} to complete our proof.

The paper is organized as follows. In section 2, we introduce some notations and prove an integral equality. Then we use the integral equality through choosing  these parameters to prove the Theorem \ref{Them1} in section 3.

\section{An Integral Equality }
In this section ,we drive a useful equality.

Let $\omega=v^{-\beta}$, $\beta\neq0$. We denote $k=(\beta+1)(p-1)-\beta q$, $Q=(\alpha^2v^2+\beta^2|\nabla v|^2)^{\frac{1}{2}}$, $X^i=Q^{p-2}v_i$, $X^{i}_{j}=(Q^{p-2}v_i)_j$, $E^{i}_j=X^i_j-\frac{div_g(X^i)}{n}g_{ij}$, $L^{i}_j=Q^{p-2}(\frac{v_iv_j}{v}-\frac{|\nabla v|^2}{nv}g_{ij})$. Then
\begin{align}\label{2.1a}
 E^i_jE^j_i=X^i_jX^j_i-\frac{X^i_iX^j_j}{n}.
\end{align}
and 
\begin{align}\label{2.1b}
	|L^{i}_j|^2=\frac{n-1}{n}Q^{2p-4}v^{-2}|\nabla v|^4
\end{align}

We modify $E^i_j$ to deal with the subcritical case. We set $F^i_j=E^{i}_j+\varepsilon div_g(X^i)g_{ij}$, for some $\varepsilon\neq 0$, which is  $ F^i_j=X^{i}_j+(\varepsilon-\frac{1}{n})div_g(X^i)g_{ij}$. Using the fact that $L^i_j$ is trace free, we have
\begin{align}\label{2.1}
	F^i_jF^j_i&=X^i_jX^j_i+(n\varepsilon^2-\frac{1}{n})(X^l_l)^2,\\
	F^i_jL^i_j&=E^i_jL^i_j=Q^{p-2}\left[(Q^{p-2}v_i)_j\frac{v_iv_j}{v}-(Q^{p-2}v_l)_l\frac{|\nabla v|^2}{nv}\right].\label{2.2}
\end{align}
Then our equation \eqref{1.2} becomes
\begin{align}\label{2.3}
	X^i_i-(\beta+1)(p-1)v^{-1}|\nabla v|^2Q^{p-2}-\beta^{-1}v^k+\beta^{-1}\lambda Q^{p-2}v=0.
\end{align} 
Multiplying \eqref{2.3} with $v^aX^j_j$ and integrating on $\mathbb{S}^n$, $a$ shall  be determined  in later, we get
\begin{equation}
\begin{aligned}\label{2.4}
	&\int v^aX^i_iX^j_j-(\beta+1)(p-1)\int v^{a-1}|\nabla v|^2Q^{p-2}X^j_j-\beta^{-1}\int v^{k+a}X^j_j\\+&\beta^{-1}\lambda\int v^{a+1}Q^{p-2}X^j_j=0.
\end{aligned}
\end{equation}
As by integral by parts, for the third term in \eqref{2.4} we have  
\begin{align}\label{2.5}
	-\beta^{-1}\int v^{k+a}X^j_j=\beta^{-1}(k+a)\int v^{k+a-1}|\nabla v|^2Q^{p-2}.
\end{align}
Note that 
\begin{align}\label{2.6}
	(Q^{p-2})_j=\left[(\alpha^2v^2+\beta^2|\nabla v|^2)^{\frac{p-2}{2}}\right]_j=(p-2)Q^{p-4}(\alpha^2vv_j+\beta^2v_lv_{lj}).
\end{align}
Now we set $f=vv_jv_iv_{ji}-|\nabla v|^4$, then the last term in \eqref{2.4} becomes 
\begin{equation} 
\begin{aligned}\label{2.7}
	&\beta^{-1}\lambda\int v^{a+1}Q^{p-2}(Q^{p-2}v_j)_j\\=&-\beta^{-1}\lambda(a+1)\int v^a|\nabla v|^2Q^{2p-4}-\beta^{-1}\lambda\int v^{a+1}(Q^{p-2})_jQ^{p-2}v_j\\=&-\beta^{-1}\lambda(a+p-1)\int v^a|\nabla v|^2Q^{2p-4}-(p-2)\beta\lambda\int v^aQ^{2p-6}f.
\end{aligned}
\end{equation}
As for the first term  in \eqref{2.4},  we observe that $(Q^{p-2}v_j)_{ji}=(Q^{p-2}v_j)_{ij}-R_{ji}v_jQ^{p-2}$ where $R_{ij}$ is the Ricci curvature.  So we  have
\begin{align*}
	\int v^a(Q^{p-2}v_i)_iX^j_j=&-a\int v^{a-1}Q^{p-2}|\nabla v|^2X^j_j-\int v^aQ^{p-2}v_iX^j_{ji}\\=&-a\int v^{a-1}Q^{p-2}|\nabla v|^2X^j_j-\int v^a(Q^{p-2}v_j)_ijQ^{p-2}v_i+\int v^aQ^{2p-4}R_{ji}v_jv_i\\=&-a\int v^{a-1}Q^{p-2}|\nabla v|^2X^j_j+\int v^a(Q^{p-2}v_i)_j(Q^{p-2}v_j)_i\\&+a\int v^{a-1}(Q^{p-2}v_j)_iQ^{p-2}v_iv_j+\int v^aQ^{2p-4}R_{ji}v_jv_i.
\end{align*}
Invoking  \eqref{2.1}, it follows that  the first term  in \eqref{2.4} becomes
\begin{equation}\label{2.8}
	\begin{aligned}
		\int v^aX^i_iX^j_j=&-\frac{na}{n-1+n^2\varepsilon^2}\int v^{a-1}Q^{p-2}|\nabla v|^2X^i_i+\frac{n}{n-1+n^2\varepsilon^2}\int v^aR_{ji}v_jv_i\\&+\frac{na}{n-1+n^2\varepsilon^2}\int v^{a-1}(Q^{p-2}v_j)_iQ^{p-2}v_iv_j+\frac{n}{n-1+n^2\varepsilon^2}\int v^aF^i_jF^j_i.
	\end{aligned}
\end{equation}

To deal with the term in \eqref{2.5}, we times the equation \eqref{2.3} with $|\nabla v|^2v^{a-1}Q^{p-2}$. Then for the third term in \eqref{2.4}, we get
\begin{align}\label{2.9}
	&-\beta^{-1}\int v^{k+a}X^j_j=\beta^{-1}(k+a)\int v^{a+k-1}|\nabla v|^2Q^{p-2}
	\\
	=&(k+a)\int v^{a-1}|\nabla v|^2Q^{p-2}X^i_i-(k+a)(\beta+1)(p-1)\int v^{a-2}|\nabla v|^4Q^{2p-4}\nonumber\\&+(k+a)\beta^{-1}\lambda\int v^a|\nabla v|^2Q^{2p-4}.\nonumber
\end{align}
Recalling that $k=(\beta+1)(p-1)-\beta q$ and $R_{ij}=(n-1)g_{ij}$, combining \eqref{2.7}, \eqref{2.8} and \eqref{2.9} with \eqref{2.4},  we arrive at the following integral idendity.
\begin{proposition}
	If $v$ is a positive  solution for the equation \eqref{2.3}, then we  have 
\begin{equation}\label{2.10}
	\begin{aligned}
		&\left(-\beta q+\frac{-a+an^2\varepsilon^2}{n-1+n^2\varepsilon^2}\right)\int v^{a-1}Q^{p-2}|\nabla v|^2X^i_i+\frac{na}{n-1+n^2\varepsilon^2}\int v^{a-1}Q^{p-2}v_iv_j(Q^{p-2}v_i)_j\\+&
		\left[\frac{n(n-1)}{n-1+n^2\varepsilon^2}+\lambda(p-1-q)\right]\int v^a|\nabla v|^2Q^{2p-4}+\frac{n}{n-1+n^2\varepsilon^2}\int v^aF^i_jF^j_i\\-&(k+a)(\beta+1)(p-1)\int v^{a-2}|\nabla v|^4Q^{2p-4}-\beta\lambda(p-2)\int v^aQ^{2p-6}f=0.
	\end{aligned}
	\end{equation}
\end{proposition}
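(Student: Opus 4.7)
The plan is to substitute the preparatory identities \eqref{2.5}, \eqref{2.7}, \eqref{2.8}, \eqref{2.9} (each already derived above the statement) into \eqref{2.4} and collect like terms. Recall that \eqref{2.4} was obtained by multiplying the equation \eqref{2.3} by $v^a X^j_j$ and integrating over $\mathbb{S}^n$; it contains four integrals, each of which will be rewritten in a canonical form so that the final identity falls out by direct addition.

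For the first term $\int v^a X^i_iX^j_j$ one carries out two integrations by parts: peel the $i$-derivative off $X^i_i$ to produce $\int v^{a-1}Q^{p-2}|\nabla v|^2 X^j_j$, then commute $(Q^{p-2}v_j)_{ji}$ via the Ricci identity to generate both the Ricci term $\int v^a Q^{2p-4}R_{ji}v_iv_j$ and a term $\int v^a X^i_jX^j_i$. Applying \eqref{2.1} converts $X^i_jX^j_i$ into $F^i_jF^j_i$ at the cost of a $(X^l_l)^2$ term, which is moved to the left and yields the ubiquitous factor $n/(n-1+n^2\varepsilon^2)$ via $1+n\varepsilon^2-1/n = (n-1+n^2\varepsilon^2)/n$; this is exactly the content of \eqref{2.8}. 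The fourth term $\beta^{-1}\lambda\int v^{a+1}Q^{p-2}X^j_j$ is handled by \eqref{2.7}: integrate by parts once and compute $(Q^{p-2})_j$ via \eqref{2.6} to produce the Hessian-type defect $f = vv_iv_jv_{ji}-|\nabla v|^4$.

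The third term $-\beta^{-1}\int v^{k+a}X^j_j$ is the main difficulty, and its treatment is the key conceptual step. A plain integration by parts (\eqref{2.5}) leaves the stray power $v^{a+k-1}$, which matches nothing else in sight. The trick carried out in \eqref{2.9} is to multiply the equation \eqref{2.3} itself by $v^{a-1}|\nabla v|^2 Q^{p-2}$ and integrate, thereby re-expressing $\int v^{a+k-1}|\nabla v|^2 Q^{p-2}$ as a linear combination of the integrals $\int v^{a-1}Q^{p-2}|\nabla v|^2 X^i_i$, $\int v^{a-2}|\nabla v|^4 Q^{2p-4}$, and $\int v^a|\nabla v|^2 Q^{2p-4}$ — all of which already appear elsewhere. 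This ``closing'' of the system is where one expects the real work to lie; everything else is unwinding integrations by parts.

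What remains is bookkeeping. Using $R_{ij}=(n-1)g_{ij}$ on $\mathbb{S}^n$, the coefficient of $\int v^{a-1}Q^{p-2}|\nabla v|^2 X^i_i$ gathers $-na/(n-1+n^2\varepsilon^2)$ from \eqref{2.8}, $-(\beta+1)(p-1)$ directly from the second term of \eqref{2.4}, and $(k+a)$ from \eqref{2.9}; with $k=(\beta+1)(p-1)-\beta q$ these combine to $-\beta q + (-a+an^2\varepsilon^2)/(n-1+n^2\varepsilon^2)$. The coefficient of $\int v^a|\nabla v|^2 Q^{2p-4}$ combines $n(n-1)/(n-1+n^2\varepsilon^2)$ (from the Ricci term in \eqref{2.8}) with $\beta^{-1}\lambda[-(a+p-1)+(k+a)] = \lambda(p-1-q)$ (using again the definition of $k$). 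Each of the remaining four integrals ($F^i_jF^j_i$, $v_iv_j(Q^{p-2}v_i)_j$, $|\nabla v|^4 Q^{2p-4}$, and $Q^{2p-6}f$) has a single source, and reading off its coefficient from \eqref{2.7}--\eqref{2.9} reproduces exactly the coefficients displayed in \eqref{2.10}.
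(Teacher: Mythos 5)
Your proposal is correct and follows exactly the paper's route: substitute \eqref{2.7}, \eqref{2.8}, \eqref{2.9} into \eqref{2.4} and collect coefficients, with the key step being the re-expression of $\int v^{a+k-1}|\nabla v|^2Q^{p-2}$ by multiplying \eqref{2.3} with $v^{a-1}|\nabla v|^2Q^{p-2}$. Your coefficient bookkeeping (e.g. $-\tfrac{na}{n-1+n^2\varepsilon^2}-(\beta+1)(p-1)+(k+a)$ for the first integral and $\beta^{-1}\lambda[-(a+p-1)+(k+a)]=\lambda(p-1-q)$ for the second) checks out against \eqref{2.10}.
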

To address the last term for $p\neq 2$, we need the following lemma.
\begin{lemma}
	 We  have 
\begin{equation}\label{2.11}
		\begin{aligned}
		\int v^{a-1}Q^{p-2}|\nabla v|^2(Q^{p-2}v_i)_i&=-(a-1)\int v^{a-2}|\nabla v|^4Q^{2p-4}\\
	&	-\frac{p}{p-1}\int v^{a-1}(Q^{p-2}v_j)_iQ^{p-2}v_iv_j-\frac{p-2}{p-1}\alpha^2\int v^aQ^{2p-6}f. 
	\end{aligned}
	\end{equation}

\end{lemma}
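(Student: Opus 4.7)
The strategy is a single integration by parts to peel off the divergence on the left, followed by two applications of the algebraic identities $Q^2 = \alpha^2 v^2 + \beta^2|\nabla v|^2$ and $f = v\,v_iv_jv_{ji} - |\nabla v|^4$ to convert the resulting auxiliary terms into the three quantities on the right-hand side. Throughout, let $I$ denote the left-hand side and set $A = \int v^{a-2}|\nabla v|^4 Q^{2p-4}$, $B = \int v^a Q^{2p-6}f$, $T = \int v^{a-1}(Q^{p-2}v_j)_iQ^{p-2}v_iv_j$, and $C = \int v^{a-1}Q^{2p-4}v_iv_jv_{ji}$.

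First I would integrate by parts to obtain $I = -\int \partial_i(v^{a-1}Q^{p-2}|\nabla v|^2)\,Q^{p-2}v_i$. Applying the product rule to the three factors produces $-(a-1)A$ when the derivative lands on $v^{a-1}$; produces $-2C$ when it lands on $|\nabla v|^2 = v_jv_j$; and produces $-\int v^{a-1}(Q^{p-2})_iv_i|\nabla v|^2 Q^{p-2}$ when it lands on $Q^{p-2}$. Expanding $T$ through the Leibniz rule $(Q^{p-2}v_j)_i = (Q^{p-2})_iv_j + Q^{p-2}v_{ji}$ identifies this last integral as $T - C$, giving
\[
I = -(a-1)A - T - C.
\]

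The remaining task is to re-express $C$ in terms of $T$, $A$, and $B$. Substituting $v_iv_jv_{ji} = (f+|\nabla v|^4)/v$ inside $C$ and then splitting $Q^{2p-4} = (\alpha^2 v^2 + \beta^2|\nabla v|^2)\,Q^{2p-6}$ yields $C = A + \alpha^2 B + \beta^2 F$, where $F = \int v^{a-2}Q^{2p-6}|\nabla v|^2 f$ is an auxiliary integral. Applying the same two substitutions to $(Q^{p-2})_iv_i = (p-2)Q^{p-4}(\alpha^2 v|\nabla v|^2+\beta^2 v_iv_jv_{ji})$ causes the bracket $\alpha^2 v^2|\nabla v|^2 + \beta^2|\nabla v|^4$ to collapse into $Q^2|\nabla v|^2$, producing the clean formula $(Q^{p-2})_iv_i = \tfrac{p-2}{v}\bigl[Q^{p-2}|\nabla v|^2 + \beta^2 Q^{p-4}f\bigr]$. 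Integrating against $v^{a-1}Q^{p-2}|\nabla v|^2$ (and recalling that this combination is precisely the middle integral from the first step, which equals $T - C$) then yields the parallel identity $T - C = (p-2)(A + \beta^2 F)$.

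Finally I would eliminate the auxiliary $F$ from the two relations $C = A + \alpha^2 B + \beta^2 F$ and $T - C = (p-2)(A + \beta^2 F)$; a short calculation gives $(p-1)C = T + (p-2)\alpha^2 B$. Substituting into $I = -(a-1)A - T - C$ then produces the claimed identity with the rational coefficients $p/(p-1)$ and $(p-2)\alpha^2/(p-1)$. The main delicate point is recognizing the parallel structure — that both $C$ and $T - C$ admit clean expressions in the same auxiliary quantity $F$ after the two substitutions — and this parallelism is precisely what makes the final elimination possible and explains the appearance of $p-1$ in the denominator.
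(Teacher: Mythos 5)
Your proof is correct and follows essentially the same route as the paper: a single integration by parts giving $I=-(a-1)A-T-C$, followed by the algebraic relation $(p-1)C=T+(p-2)\alpha^2B$ coming from the explicit formula for $(Q^{p-2})_i$. The only difference is organizational --- the paper derives that relation from one pointwise identity, $(Q^{p-2}v_j)_iQ^{p-2}v_iv_j=(p-1)Q^{2p-4}v_{ji}v_iv_j-(p-2)\alpha^2vQ^{2p-6}f$, whereas you route through the auxiliary integral $F$ and then eliminate it, which amounts to the same computation.
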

\begin{proof} Combining 
	\begin{align*}
		L.H.S&=\int v^{a-1}Q^{p-2}v_jv_j(Q^{p-2}v_i)_i\\&=-(a-1)\int Q^{2p-4}v^{a-2}|\nabla v|^4-\int v^{a-1}(Q^{p-2}v_j)_iQ^{p-2}v_iv_j-\int v^{a-1}Q^{2p-4}v_iv_jv_{ij}
	\end{align*}
	and
\begin{align*}
	&(Q^{p-2}v_j)_iQ^{p-2}v_iv_j\\=&Q^{2p-4}v_{ji}v_iv_j+(p-2)Q^{2p-6}(\alpha^2v|\nabla v|^4+\beta^2|\nabla v|^2v_iv_kv_{ki})\\=& Q^{2p-4}v_{ji}v_iv_j-(p-2)\alpha^2vQ^{2p-6}f+(p-2)Q^{2p-6}v_iv_kv_{ik}(\alpha^2v^2+\beta^2|\nabla v|^2)\\=&(p-1)Q^{2p-4}v_{ji}v_iv_j-(p-2)\alpha^2vQ^{2p-6}f
\end{align*}
we get \eqref{2.11}.
\end{proof}
Therefore the last term in \eqref{2.10} becomes ,
\begin{equation}\label{2.12}
\begin{aligned}
	-\beta\lambda(p-2)\int v^aQ^{2p-6}f=&\frac{\beta\lambda(p-1)}{\alpha^2}\int v^{a-1}Q^{p-2}|\nabla v|^2X^i_i\\&+\frac{\beta\lambda(p-1)(a-1)}{\alpha^2}\int v^{a-2}|\nabla v|^4Q^{2p-4}\\&+\frac{\beta \lambda p}{\alpha^2}\int v^{a-1}(Q^{p-2}v_j)_iQ^{p-2}v_iv_j.
\end{aligned}
\end{equation}
It follows that we get the following important integral idendity. 
\begin{proposition}If $v$ is a positive solution for the equation \eqref{2.3}, then  for any  constants $\varepsilon, \beta, a$  we  have 
	\begin{equation}\label{2.12}
		\begin{aligned}
			0=&\left[-\beta q+\frac{-a+an^2\varepsilon^2}{n-1+\varepsilon^2n^2}+\frac{\beta \lambda(p-1)}{\alpha^2}\right]\int v^{a-1}Q^{p-2}|\nabla v|^2X^i_i\\&+\left[\frac{n(n-1)}{n-1+n^2\varepsilon^2}+\lambda (p-1-q)\right]\int v^a|\nabla v|^2Q^{2p-4}\\&+\left(\frac{\beta\lambda p}{\alpha^2}+\frac{na}{n-1+n^2\varepsilon^2}\right)\int v^{a-1}Q^{p-2}v_iv_j(Q^{p-2}v_i)_j\\&+\left[\frac{\beta\lambda(p-1)(a-1)}{\alpha^2}-(k+a)(\beta+1)(p-1)\right]\int v^{a-2}|\nabla v|^4Q^{2p-4}\\&+\frac{n}{n-1+n^2\varepsilon^2}\int v^aF^i_jF^j_i.
		\end{aligned}
	\end{equation}
\end{proposition}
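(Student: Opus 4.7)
The plan is to combine the integral identity \eqref{2.10} already established with the substitution derived from Lemma 2.2, in order to eliminate the last term $-\beta\lambda(p-2)\int v^a Q^{2p-6}f$ of \eqref{2.10}. No new integration by parts or geometric input beyond what has already been used is required; the final Proposition is an algebraic consolidation.

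First, I would solve the Lemma's identity \eqref{2.11} for $\int v^a Q^{2p-6} f$ (pulling the factor $\tfrac{p-2}{p-1}\alpha^2$ across) and multiply the result by $-\beta\lambda(p-2)$; the $(p-2)$ factors cancel, yielding the displayed expression immediately preceding the Proposition. The apparent division by $p-2$ is purely cosmetic: the left-hand side carries a matching $p-2$, so the identity remains valid for all $p$, including $p=2$, where both sides vanish trivially.

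Next, I would substitute this expression into the last term of \eqref{2.10} and collect coefficients by integrand. Exactly five types of integrand appear in the target identity: $v^{a-1}Q^{p-2}|\nabla v|^2 X^i_i$, $v^a|\nabla v|^2 Q^{2p-4}$, $v^{a-1}Q^{p-2}v_iv_j(Q^{p-2}v_i)_j$, $v^{a-2}|\nabla v|^4 Q^{2p-4}$, and $v^a F^i_j F^j_i$. The substitution contributes only to integrals of the first, third, and fourth kinds, bumping their coefficients in \eqref{2.10} by $\tfrac{\beta\lambda(p-1)}{\alpha^2}$, $\tfrac{\beta\lambda p}{\alpha^2}$, and $\tfrac{\beta\lambda(p-1)(a-1)}{\alpha^2}$ respectively; the coefficients of the two remaining integrals carry over from \eqref{2.10} unchanged. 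Matching these five updated coefficients against the displayed identity gives the Proposition.

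I do not expect a genuine obstacle here: the conceptual work has already been accomplished in the first Proposition and Lemma 2.2, so the proof amounts to bookkeeping. The only point requiring care is the limiting case $p=2$, which one handles by noting the cancellation in the factor $(p-2)$ described above, so that the consolidated identity is valid for the full range of parameters under which the Proposition is asserted. The payoff of the statement is its parametric flexibility in $\varepsilon$, $\beta$, $a$, which is what will be exploited in Section 3 to prove Theorem \ref{Them1}.
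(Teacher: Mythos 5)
Your proposal is correct and follows essentially the same route as the paper: the paper multiplies the identity \eqref{2.11} by $\beta\lambda(p-1)/\alpha^2$ (so that, as you note, no division by $p-2$ ever occurs) to express $-\beta\lambda(p-2)\int v^aQ^{2p-6}f$ in terms of the three integrals you list, and then substitutes this into \eqref{2.10}, producing exactly the coefficient shifts $\tfrac{\beta\lambda(p-1)}{\alpha^2}$, $\tfrac{\beta\lambda p}{\alpha^2}$, and $\tfrac{\beta\lambda(p-1)(a-1)}{\alpha^2}$ that you identify. The bookkeeping checks out against the stated identity, so nothing further is needed.
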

\section{Proof of the Theorem\ref{Them1}}
In this section, through the choice of the constants $\varepsilon, \beta, a$,  we analyze the coefficients in \eqref{2.12},  and we  prove $|\nabla v|=0$, then $|\nabla \omega |=0$ so we complete the proof of our Theorem~\ref{Them1}.

Using \eqref{2.2} in the third term in \eqref{2.12}, we can rewrite \eqref{2.12}
\begin{equation}
		\begin{aligned}
			0=&\left[-\beta q+\frac{-a+an^2\varepsilon^2}{n-1+\varepsilon^2n^2}+\frac{\beta \lambda(p-1)}{\alpha^2}\right]\int v^{a-1}Q^{p-2}|\nabla v|^2X^i_i\\&+\left[\frac{n(n-1)}{n-1+n^2\varepsilon^2}+\lambda (p-1-q)\right]\int v^a|\nabla v|^2Q^{2p-4}\\&+\left(\frac{\beta\lambda p}{\alpha^2}+\frac{na}{n-1+n^2\varepsilon^2}\right)\int v^{a}F^i_jL^i_j\\&+\left(\frac{\beta\lambda p}{n\alpha^2}+\frac{a}{n-1+n^2\varepsilon^2}\right)\int v^{a-1}Q^{p-2}|\nabla v|^2X^i_i\\&+\left[\frac{\beta\lambda(p-1)(a-1)}{\alpha^2}-(k+a)(\beta+1)(p-1)\right]\int v^{a-2}|\nabla v|^4Q^{2p-4}\\&+\frac{n}{n-1+n^2\varepsilon^2}\int v^aF^i_jF^j_i.
		\end{aligned}
	\end{equation}
To be convenient, we let $$M=\frac{(\frac{\beta\lambda p}{\alpha^2}+\frac{na}{n-1+n^2\varepsilon^2})}{\frac{2n}{n-1+n^2\varepsilon^2}}.$$
	By \eqref{2.1b}, we get the following crucial integral idendity.
\begin{equation}\label{3.2}
	\begin{aligned}
		0=&\left[-\beta q+\frac{-a+an^2\varepsilon^2}{n-1+\varepsilon^2n^2}+\frac{\beta \lambda(p-1)}{\alpha^2}+\frac{\beta\lambda p}{n\alpha^2}+\frac{a}{n-1+n^2\varepsilon^2}\right]\int v^{a-1}Q^{p-2}|\nabla v|^2X^i_i\\
		&+\left[\frac{n(n-1)}{n-1+n^2\varepsilon^2}+\lambda (p-1-q)\right]\int v^a|\nabla v|^2Q^{2p-4}\\&+\frac{n}{n-1+n^2\varepsilon^2}\int v^a(F^i_j+ML^i_j)(F^j_i+ML^j_i)\\&+\Bigg[-\frac{1}{4}(\frac{\beta\lambda p}{\alpha^2}+\frac{na}{n-1+n^2\varepsilon^2})^2\frac{(n-1+n^2\varepsilon^2)}{n}\frac{(n-1)}{n} +\frac{\beta\lambda(p-1)(a-1)}{\alpha^2}\\&-(k+a)(\beta+1)(p-1)\Bigg]\int v^{a-2}|\nabla v|^4Q^{2p-4}.		\end{aligned}
\end{equation}	
Here we have only four terms and but three parameters $\beta, a, \varepsilon$, we shall choose them properly to cancel three terms.	

First, we choose $\varepsilon$  to make 
\begin{align}\label{3.3}
	\frac{n(n-1)}{n-1+n^2\varepsilon^2}+\lambda (p-1-q)=0,\end{align}	
from \eqref{3.3} we know the coffecient of the second term in  idendity \eqref{3.2} is zero.

To see this is possible, we show that 
\begin{lemma}
	If the constant $p, q, \alpha$ and $\lambda$ satisfy the condition in the Theorem~\ref{Them1}, then we have
	$n+\lambda(p-1-q)>0$.
	\end{lemma}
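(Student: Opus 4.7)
The plan is to unpack the definitions $\alpha_{p,q}=p/(q+1-p)$ and $\lambda_{p,q}=\alpha_{p,q}(n+1-\alpha_{p,q}q)$ and show that the inequality $n+\lambda(p-1-q)>0$ is equivalent, after elementary algebra, to the upper Sobolev bound $q<\frac{np-n+p}{n-p}$ which is already part of the hypotheses of Theorem~\ref{Them1}. In other words, I expect the lemma to hold as a direct tautology of the assumed range of exponents, rather than requiring any nontrivial estimate.

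The key algebraic observation I would use is the identity $\alpha(q+1-p)=p$ that follows straight from the definition of $\alpha$. Multiplying by $(n+1-\alpha q)$ gives $\lambda(q+1-p)=p(n+1-\alpha q)$, so $n+\lambda(p-1-q)=n-p(n+1)+p\alpha q$. Hence the inequality reduces to $p\alpha q>p(n+1)-n$. Substituting $\alpha q=pq/(q+1-p)$ and clearing the denominator (positive because $q>(n+1)(p-1)/(n+1-p)>p-1$ under the hypotheses) turns the inequality into a linear relation in $q$ whose leading coefficient, after collecting terms, factors as $(p-1)(p-n)$.

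At this stage the conclusion is immediate: under $1<p<n$ one has $p-1>0$ and $p-n<0$, so dividing through flips the inequality and produces precisely $q<(np-n+p)/(n-p)$, which is assumed. The only part requiring care is the bookkeeping of signs when multiplying by $q+1-p$ and by $(p-1)(p-n)$; both signs are determined by the assumptions, so no real obstacle arises. I do not foresee needing any analytic tool beyond this manipulation — the statement is essentially a restatement that the Sobolev-subcritical regime forces a one-sided bound on $\lambda(q+1-p)$.
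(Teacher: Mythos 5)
Your proposal is correct and follows essentially the same route as the paper: both reduce $n+\lambda(p-1-q)>0$ via $\alpha(q+1-p)=p$ to $p\alpha q>p(n+1)-n$, clear the (positive) denominator $q+1-p$, and observe that the coefficient of $q$ factors as $(p-1)(p-n)<0$, so the inequality is exactly the subcritical bound $q<\frac{np-n+p}{n-p}$ assumed in Theorem~\ref{Them1}. The sign bookkeeping you flag is handled identically in the paper's proof.
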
	
\begin{proof}
	Recall that $\lambda=\alpha(n+1-\alpha q)$ and $\alpha=\frac{p}{q+1-p}$, then
	we need to show
	\begin{align*}
		n-p\left(n+1-\frac{pq}{q+1-p}\right)>0,
	\end{align*}
	which holds iff
	\begin{align*}
		\frac{pq}{q+1-p}>\frac{np+p-n}{p}.
	\end{align*}
The above inequality is reduced to 
	\begin{align*}
		(1-p)(n-p)q>(1-p)(np+p-n),
	\end{align*}
which is from the subcritical exponent of $q$, 
	\begin{align*}
		q<\frac{(n+1)(p-1)+1}{n-p}.
	\end{align*}
\end{proof}	
Now we take $\varepsilon=[\frac{n-\lambda(q+1-p)}{\lambda(q+1-p)}]^\frac{1}{2}(n-1)^{\frac{1}{2}}n^{-1}$ then we get
\begin{align}
	n^2\varepsilon^2=\frac{[n-\lambda(q+1-p)]}{\lambda(q+1-p)}(n-1), 
\end{align}
and 
\begin{align}
	\frac{n}{n-1+n^2\varepsilon^2}=\frac{\lambda(q+1-p)}{n-1}.
\end{align}
Second, we let $a=t\beta$, and take $t$ to make 
\begin{align}\label{3.5}
\frac{\lambda p}{n\alpha^2}-q+\frac{tn^2\varepsilon^2}{n-1+n^2\varepsilon^2}+\frac{\lambda(p-1)}{\alpha^2}=0.
\end{align}

From \eqref{3.5} we know the coffecient of the first term in the idendity \eqref{3.2} is zero.

By substituting $\varepsilon$, we take
\begin{align}\label{3.6}
	t=\left(q-\frac{\lambda(p-1)}{\alpha^2}-\frac{\lambda p}{n\alpha^2}\right)\frac{n}{n-\lambda(q+1-p)}.
\end{align}
Now we simplify it.
\begin{lemma} In fact 
	$t=\frac{n+1}{\alpha}$.
\end{lemma}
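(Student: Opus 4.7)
The plan is to prove the identity $t = (n+1)/\alpha$ by a direct algebraic verification, using only the two defining relations $\alpha(q+1-p) = p$ and $\lambda = \alpha(n+1) - \alpha^2 q$ (equivalently, $\lambda/\alpha = n+1-\alpha q$ and $\lambda/\alpha^2 = (n+1)/\alpha - q$). This will turn the problem into a polynomial identity in the parameters, so there is no genuine analytic difficulty — the obstacle is just careful bookkeeping.

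Concretely, the claim is equivalent, upon clearing the denominator in the definition \eqref{3.6} of $t$, to
\begin{equation*}
\frac{n+1}{\alpha}\bigl[n - \lambda(q+1-p)\bigr] \;=\; n\left[q - \frac{\lambda(p-1)}{\alpha^{2}} - \frac{\lambda p}{n\alpha^{2}}\right].
\end{equation*}
I would handle the two sides separately and then compare. For the left-hand side, use $\lambda(q+1-p) = p\lambda/\alpha = p(n+1-\alpha q)$ to rewrite the bracket as $n - p(n+1) + p\alpha q = -(p-1)n - p + p\alpha q$, giving
\begin{equation*}
\mathrm{LHS} \;=\; \frac{n+1}{\alpha}\bigl[p\alpha q - (p-1)n - p\bigr].
\end{equation*}

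For the right-hand side, first combine the two $\lambda/\alpha^{2}$ terms into $\lambda(np-n+p)/(n\alpha^{2})$, then substitute $\lambda/\alpha^{2} = (n+1)/\alpha - q$ to obtain
\begin{equation*}
\mathrm{RHS} \;=\; nq - \Bigl(\tfrac{n+1}{\alpha} - q\Bigr)(np-n+p) \;=\; q\bigl[n + (np-n+p)\bigr] - \tfrac{n+1}{\alpha}(np-n+p).
\end{equation*}
The key simplification is $n + (np-n+p) = p(n+1)$, so the right-hand side becomes
\begin{equation*}
\mathrm{RHS} \;=\; pq(n+1) - \tfrac{n+1}{\alpha}(np-n+p) \;=\; \frac{n+1}{\alpha}\bigl[pq\alpha - (np-n+p)\bigr],
\end{equation*}
which matches the bracket on the LHS verbatim. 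Since both sides agree and $n-\lambda(q+1-p)\ne 0$ by the preceding lemma and the subcritical range of $q$, the identity $t = (n+1)/\alpha$ follows.

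The hardest part is purely notational: keeping track of the two equivalent forms of $\lambda$ (one with $\alpha$ in the numerator, one with $\alpha^{2}$) and recognizing the fortunate collapse $n + (np-n+p) = p(n+1)$ that makes the common factor $(n+1)/\alpha$ visible on both sides. No further structure of the equation \eqref{2.3} is needed — this is a closed-form identity among $p,q,n,\alpha,\lambda$ valid whenever the defining relations for $\alpha$ and $\lambda$ hold.
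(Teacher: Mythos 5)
Your proof is correct and takes essentially the same route as the paper: a direct algebraic verification of the identity $\frac{n+1}{\alpha}\bigl[n-\lambda(q+1-p)\bigr]=n\bigl[q-\frac{\lambda(p-1)}{\alpha^2}-\frac{\lambda p}{n\alpha^2}\bigr]$ using only the defining relations $\alpha(q+1-p)=p$ and $\lambda=\alpha(n+1-\alpha q)$. The paper organizes the computation by factoring $\frac{p-1}{q+1-p}$ out of the numerator and denominator of $t$ and matching the remaining brackets, whereas you cross-multiply and extract the common factor $\frac{n+1}{\alpha}$; the bookkeeping is equivalent, and your appeal to the preceding lemma for $n-\lambda(q+1-p)\neq 0$ is the correct justification for dividing back through.
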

\begin{proof}
	First we have 
	\begin{align*}
		&n-\lambda(q+1-p)=\frac{1}{q+1-p}[(q+1-p)n-p(q+1-p)(n+1)+p^2q]\\=&\frac{1}{q+1-p}[qn+(1-p)n-(p-1)(q+1-p)(n+1)-(q+1-p)(n+1)+q+(p^2-1)q]\\=&\frac{p-1}{q+1-p}[1-(q+1-p)(n+1)+(p+1)q].
	\end{align*}
And we can get	\begin{align*}
		&n\left(q-\frac{\lambda(p-1)}{\alpha^2}-\frac{\lambda p}{n\alpha^2}\right)\\=&nq-\frac{n(n+1-\alpha q)(p-1)}{\alpha}-\frac{(n+1-\alpha q)p}{\alpha}\\=&nq-\frac{n+1-\alpha q}{\alpha}-\frac{n(n+1-\alpha q)(p-1)}{\alpha}-\frac{(n+1-\alpha q)(p-1)}{\alpha}\\=&\frac{nq\alpha-n-1+\alpha q}{\alpha}-\frac{(n+1-\alpha q)(p-1)(n+1)}{\alpha}\\=&\frac{(q\alpha-1)(n+1)}{\alpha}-\frac{(n+1-\alpha q)(p-1)(n+1)}{\alpha}\\=&\frac{n+1}{\alpha}[q\alpha-1-(n+1-\alpha q)(p-1)]\\
		=&\frac{n+1}{\alpha}[\frac{(q+1)(p-1)}{q+1-p}-(n+1-\alpha q)(p-1)]\\=&\frac{n+1}{\alpha}\frac{(p-1)}{(q+1-p)}[q+1-(n+1)(q+1-p)+pq].
	\end{align*}
\end{proof}

Now we substitute $\varepsilon$ and $a=\frac{n+1}{\alpha}\beta$ into the coefficient of $\int v^{a-2}|\nabla v|^4Q^{2p-4}$, and we shall find $\beta$ such that the coffecient of the last term in the idendity \eqref{3.2} is zero.

First we get the coefficient of $\int v^{a-2}|\nabla v|^4Q^{2p-4}$ in  \eqref{3.2}  is $g(\beta)$, where
\begin{align*}
	g(\beta)=&-\frac{1}{4}\left(\frac{\beta\lambda p}{\alpha^2}+\frac{na}{n-1+n^2\varepsilon^2}\right)^2\frac{(n-1+n^2\varepsilon^2)}{n}\frac{(n-1)}{n} +\frac{\beta\lambda(p-1)(a-1)}{\alpha^2}\\
	&-(k+a)(\beta+1)(p-1)\\
	=&-\frac{1}{4n}(\frac{\lambda p}{\alpha ^2}+\frac{n+1}{\alpha}\frac{\lambda(q+1-p)}{n-1})^2\frac{(n-1)^2}{\lambda (q+1-p)}\beta ^2+\frac{\lambda(p-1)}{\alpha^2}\frac{(n+1)}{\alpha}\beta^2 \\
	&-(p-1)^2\beta^2+q(p-1)\beta^2-\frac{(n+1)(p-1)}{\alpha}\beta^2\\&+q(p-1)\beta-\frac{(n+1)(p-1)}{\alpha}\beta-\frac{\lambda(p-1)}{\alpha^2}\beta-2(p-1)^2\beta \\&-(p-1)^2. 
	\end{align*}
We have the following lemma.
\begin{lemma}
	$\exists !\beta_0$ , such that $g(\beta_0)=0$.
\end{lemma}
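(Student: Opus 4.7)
The plan is to substitute the values of $\varepsilon$ and $a=\tfrac{n+1}{\alpha}\beta$ fixed in the previous two lemmas into $g(\beta)$, and then reduce the resulting polynomial in $\beta$ via the two defining relations $\alpha(q+1-p)=p$ and $\lambda=\alpha(n+1-\alpha q)$. I expect $g$ to collapse to the negative of a perfect square in $\beta$, which immediately yields existence and uniqueness of the zero.

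Writing $g(\beta)=A\beta^{2}+B\beta+C$, the constant term $C=-(p-1)^{2}$ can be read off directly. For the quadratic coefficient, I would first simplify the bracket inside the square by factoring $\lambda/\alpha$ and using $(q+1-p)\alpha=p$:
\[
\frac{\lambda p}{\alpha^{2}}+\frac{(n+1)\lambda(q+1-p)}{\alpha(n-1)}=\frac{\lambda(q+1-p)}{\alpha}\cdot\frac{2n}{n-1}=\frac{2n\lambda p}{\alpha^{2}(n-1)}.
\]
Squaring and multiplying by the prefactor $-\tfrac{(n-1)^{2}}{4n\lambda(q+1-p)}$ produces a first contribution of $-np(n+1-\alpha q)/\alpha^{2}$. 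Combining this with the remaining $\beta^{2}$ pieces, and rewriting $q(p-1)-(p-1)^{2}=(p-1)p/\alpha$ (again from $(q+1-p)\alpha=p$), the quadratic coefficient factors as
\[
A=\frac{p-n-1}{\alpha^{2}}\bigl[(n+1-\alpha q)+(p-1)\alpha\bigr]=-\frac{(n+1-p)^{2}}{\alpha^{2}},
\]
where the last step uses $n+1+\alpha(p-1-q)=n+1-p$. An analogous reduction for the linear coefficient, relying on the identity $-\lambda/\alpha^{2}=q-(n+1)/\alpha$, produces $B=-\tfrac{2(p-1)(n+1-p)}{\alpha}$.

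At this point one recognizes the perfect-square structure
\[
g(\beta)=-\Bigl[\tfrac{n+1-p}{\alpha}\beta+(p-1)\Bigr]^{2},
\]
so $g$ vanishes precisely at the unique value
\[
\beta_{0}=-\frac{\alpha(p-1)}{n+1-p},
\]
which is well-defined because $p<n$ ensures $n+1-p\neq 0$.

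The main obstacle is the algebraic telescoping: the quadratic coefficient $A$ starts life as a squared quotient of compound expressions in $\lambda$, $\alpha$, $p$, $q$, and it is not manifest that $A\beta^{2}+B\beta+C$ assembles into a perfect square. The computation must be organized around the two identities $(q+1-p)\alpha=p$ and $\lambda/\alpha=n+1-\alpha q$, applied repeatedly to factor and cancel; once $A$ and $B$ are in the clean forms displayed above, the perfect-square pattern and hence existence and uniqueness of $\beta_{0}$ are immediate.
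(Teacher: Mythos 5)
Your proposal is correct and follows essentially the same route as the paper: both treat $g$ as a quadratic in $\beta$, simplify its coefficients via the identities $\alpha(q+1-p)=p$ and $\lambda=\alpha(n+1-\alpha q)$, and conclude from the degenerate discriminant that there is a unique (double) root. The only difference is presentational: the paper verifies $B^2-4AC=0$ directly, while you exhibit the equivalent perfect-square form $g(\beta)=-\bigl[\tfrac{n+1-p}{\alpha}\beta+(p-1)\bigr]^2$, which has the minor added benefit of giving the explicit value $\beta_0=-\tfrac{\alpha(p-1)}{n+1-p}$ (and, since $1<p<n$, makes it evident that $\beta_0\neq 0$, as required for the substitution $\omega=v^{-\beta}$).
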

\begin{proof}
	Since $g(\beta)$ is a quadratic function, we show that its determinant identically vanishes, which is
	\begin{equation}
	\begin{aligned}\label{3.7}
		&[-\frac{\lambda}{\alpha^2}-2(p-1)+q-\frac{n+1}{\alpha}]^2(p-1)^2\\+&4(p-1)^2[-\frac{1}{4n}(\frac{\lambda p}{\alpha ^2}+\frac{n+1}{\alpha}\frac{\lambda(q+1-p)}{n-1})^2\frac{(n-1)^2}{\lambda (q+1-p)}+\frac{\lambda(p-1)}{\alpha^2}\frac{n+1}{\alpha}\\&+q(p-1)-(p-1)^2-\frac{(n+1)(p-1)}{\alpha}]=0.	\end{aligned}
		\end{equation}
	We simplify it term by term, first we have 
	\begin{align*}
		&-\frac{\lambda}{\alpha^2}-2(p-1)+q-\frac{n+1}{\alpha}\\=&\frac{-(n+1)(q+1-p)+pq}{p}-2(p-1)+q-\frac{(n+1)(q+1-p)}{p}\\=&-\frac{2(n+1)(q+1-p)}{p}+2(q+1-p)\\=&\frac{2(p-n-1)(q+1-p)}{p}.
	\end{align*}
And we also have 
		\begin{align*}
			&\frac{\lambda p}{\alpha^2}+\frac{(n+1)}{\alpha}\frac{\lambda(q+1-p)}{n-1}\\=&(n+1-\alpha q)(q+1-p)[1+\frac{n+1}{n-1}]\\=&(n+1-\alpha q)(q+1-p)\frac{2n}{n-1}.
		\end{align*}
	We  simply the following term
		\begin{align*}
			&-\frac{1}{4n}(\frac{\lambda p}{\alpha ^2}+\frac{n+1}{\alpha}\frac{\lambda(q+1-p)}{n-1})^2\frac{(n-1)^2}{\lambda (q+1-p)}\\=&-\frac{1}{4n}(n+1-\alpha q)^2(q+1-p)^2\frac{4n^2}{(n-1)^2}\frac{(n-1)^2}{\lambda (q+1-p)}\\=&-\frac{n(n+1-\alpha q)(q+1-p)^2}{p}.		\end{align*}
	Then 	\begin{align*}
			\frac{\lambda(p-1)(n+1)}{\alpha^3}=\frac{(n+1-\alpha q)(p-1)(n+1)(q+1-p)^2}{p^2},
		\end{align*}
	and 	\begin{align*}
			-\frac{(n+1)(p-1)}{\alpha}=-\frac{(n+1)(p-1)(q+1-p)}{p}.
		\end{align*}
It follows that  its determinant 	\eqref{3.7} is equivalent to 
		\begin{align*}
			&\frac{(p-n-1)^2(q+1-p)^2}{p^2}-\frac{n(n+1-\alpha q)(q+1-p)^2}{p}\\&+\frac{(n+1-\alpha q)(p-1)(n+1)(q+1-p)^2}{p^2}\\&-\frac{(n+1)(p-1)(q+1-p)}{p}+(q+1-p)(p-1)=0.
		\end{align*}
		Multiplying $\frac{p^2}{q+1-p}$ and  using $(n+1-\alpha q)(q+1-p)=(q+1-p)(n+1)-pq $, it is equivalent for us to show 
		\begin{align*}
			&(p-n-1)^2(q+1-p)-pn(q+1-p)(n+1)+p^2n(q+1-p)+p^2n(p-1)\\+&(n+1)(p-1)(n+1)(q+1-p)-(n+1)(p-1)p(q+1-p)+(n+1)(p-1)p(1-p)\\-&p(p-1)(n+1)+p^2(p-1)=0,
		\end{align*}
		iff
		\begin{align*}
			(q+1-p)[(p-1-n)^2-pn(n+1)+p^2n+(n+1)^2(p-1)+(n+1)(p-1)p]=0,
		\end{align*}
		which is correct by direct computation.  
\end{proof}
From the expression $g(\beta)$, we can take $\beta_0$ such that $g(\beta_0)=0$. It follows that the first term, the second term and the last term in \eqref{3.2} is zero. At last we get the following result.
\begin{proposition}If $v$ is a positive  solution for the equation \eqref{2.3}, then  for the above determined  constants $\varepsilon, \beta, a$  we  have 
	\begin{align}\label{3.9}
		0=\frac{n}{n-1+n^2\varepsilon^2}\int v^a(F^i_j+ML^i_j)(F^j_i+ML^j_i).
	\end{align}
\end{proposition}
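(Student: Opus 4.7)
The plan is to read off Proposition 3.4 as a direct consequence of the master identity \eqref{3.2} after making all three parameter choices that have been set up in the preceding lemmas. The identity \eqref{3.2} is a linear combination of four integrals with four distinct integrands, namely
$\int v^{a-1}Q^{p-2}|\nabla v|^2 X^i_i$, $\int v^a|\nabla v|^2 Q^{2p-4}$, $\int v^{a-2}|\nabla v|^4 Q^{2p-4}$, and the squared tensor term $\int v^a(F^i_j+ML^i_j)(F^j_i+ML^j_i)$. Since we have exactly three free parameters $\varepsilon,\beta,a$ (equivalently $\varepsilon,\beta$ with $a=t\beta$), the strategy is to arrange that the first three coefficients vanish simultaneously; what survives is \eqref{3.9}.

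Concretely, I would proceed in three steps, matching the order of the lemmas above. First, I would fix $\varepsilon$ by solving the linear equation \eqref{3.3}; the preceding lemma guarantees $n+\lambda(p-1-q)>0$, so $\varepsilon^2>0$ and a real $\varepsilon$ exists. This kills the coefficient of $\int v^a|\nabla v|^2Q^{2p-4}$. Second, writing $a=t\beta$ and substituting the chosen $\varepsilon$, I would solve \eqref{3.5} for $t$; the computational lemma above reduces this to the clean value $t=(n+1)/\alpha$, which kills the coefficient of $\int v^{a-1}Q^{p-2}|\nabla v|^2 X^i_i$. Third, with $\varepsilon$ and $t$ fixed, the coefficient of $\int v^{a-2}|\nabla v|^4 Q^{2p-4}$ becomes the quadratic $g(\beta)$; I would invoke the existence lemma for $\beta_0$ to kill this last term. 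After these three cancellations, only the $F+ML$ integral remains, and \eqref{3.2} degenerates to exactly \eqref{3.9}.

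The main obstacle — and the genuine content of the argument — is the existence of a real $\beta_0$ with $g(\beta_0)=0$. Since $g$ is a quadratic in $\beta$, a real root is guaranteed once the discriminant is nonnegative; the remarkable algebraic miracle, verified term by term in the proof of the preceding lemma, is that the discriminant is actually identically zero under the assumed relations $\lambda=\alpha(n+1-\alpha q)$ and $\alpha=p/(q+1-p)$. This reflects the fact that the system of three linear/quadratic constraints in three unknowns is tightly overdetermined, and its solvability is precisely the algebraic identity that the scaling parameters of the problem satisfy. Once that discriminant vanishes, the choice $\beta=\beta_0$ is the unique double root and the proof of Proposition 3.4 reduces to assembling the three cancellations into \eqref{3.2} and writing down the leftover term.
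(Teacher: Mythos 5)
Your proposal is correct and follows exactly the paper's route: fix $\varepsilon$ by \eqref{3.3} (justified by the lemma $n+\lambda(p-1-q)>0$), fix $t=a/\beta=(n+1)/\alpha$ by \eqref{3.5}, and fix $\beta_0$ as the (double) root of the quadratic $g$, whose discriminant the paper verifies to vanish identically, so that the three non-tensor coefficients in \eqref{3.2} all vanish and only the $F+ML$ term survives. This is precisely how the paper derives \eqref{3.9}, so there is nothing to add.
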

To deduce the desired results, we cite a key Lemma from \cite{AKM18} or \cite{CM18}.
\begin{lemma}
	Let the matrix A be symmetric with positive eigenvalues and let $\lambda_{min}$ and $\lambda_{max}$ be its smallest and largest eigenvalue, respectively; let B be a symmetric matrix, then
	\begin{align*}
		trace (AB(AB)^{T})\leq n(\frac{\lambda_{max}}{\lambda_{min}})^2trace((AB)^2).
	\end{align*}
\end{lemma}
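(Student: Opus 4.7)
The plan is to diagonalize $A$ and reduce the inequality to a pointwise comparison of entries of an auxiliary symmetric matrix. Since $A$ is symmetric with positive eigenvalues, I can choose an orthonormal basis in which $A=\operatorname{diag}(a_1,\dots,a_n)$ with $0<\lambda_{min}\le a_i\le\lambda_{max}$; working in this basis is harmless because every trace and every eigenvalue is invariant under orthogonal conjugation.

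First I would dispose of the transpose. Since $A$ and $B$ are symmetric, $(AB)^{T}=B^{T}A^{T}=BA$, so
\[
\operatorname{trace}\bigl(AB(AB)^{T}\bigr)=\operatorname{trace}(AB^{2}A)=\operatorname{trace}(A^{2}B^{2}).
\]
Then I would introduce the auxiliary symmetric matrix $C:=A^{1/2}BA^{1/2}$, equivalently $B=A^{-1/2}CA^{-1/2}$. By cyclicity of the trace,
\[
\operatorname{trace}\bigl((AB)^{2}\bigr)=\operatorname{trace}\bigl(A^{1/2}BABA^{1/2}\bigr)=\operatorname{trace}\bigl((A^{1/2}BA^{1/2})^{2}\bigr)=\operatorname{trace}(C^{2}),
\]
and a parallel manipulation gives $\operatorname{trace}(A^{2}B^{2})=\operatorname{trace}(ACA^{-1}C)$.

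Next, in the eigenbasis of $A$ already fixed, a direct expansion using that $A$ is diagonal and $C_{ki}=C_{ik}$ yields
\[
\operatorname{trace}(ACA^{-1}C)=\sum_{i,k}\frac{a_i}{a_k}\,C_{ik}^{2},\qquad \operatorname{trace}(C^{2})=\sum_{i,k}C_{ik}^{2}.
\]
Since $a_i/a_k\le\lambda_{max}/\lambda_{min}$ for every pair $(i,k)$, a termwise estimate yields
\[
\operatorname{trace}\bigl(AB(AB)^{T}\bigr)\le\frac{\lambda_{max}}{\lambda_{min}}\,\operatorname{trace}\bigl((AB)^{2}\bigr),
\]
which is in fact strictly sharper than the stated inequality (no factor $n$, no square), so the lemma follows a fortiori.

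The only delicate point is the identity $\operatorname{trace}(ACA^{-1}C)=\sum_{i,k}(a_i/a_k)C_{ik}^{2}$, which is a one-line expansion once $A$ is taken diagonal and the symmetry of $C$ is used. There is no genuine analytic obstacle; the result reduces to the trivial bound $a_i/a_k\le\lambda_{max}/\lambda_{min}$. The slack between the constant $\lambda_{max}/\lambda_{min}$ I obtain and the stated constant $n(\lambda_{max}/\lambda_{min})^{2}$ suggests that in \cite{AKM18,CM18} the lemma is proved in a more general form (for instance allowing $B$ to be non-symmetric, or formulated directly for singular values of $AB$), where the extra factor $n(\lambda_{max}/\lambda_{min})^{2}$ arises naturally; in the symmetric-symmetric setting relevant to this paper it is more than enough.
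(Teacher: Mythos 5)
Your argument is correct, and it is worth noting that the paper itself offers no proof of this lemma at all: it is imported verbatim as a ``key Lemma'' from \cite{AKM18} and \cite{CM18}, so there is nothing internal to compare against except the citation. Your reduction is clean and checks out line by line: $(AB)^{T}=BA$ gives $\operatorname{trace}(AB(AB)^{T})=\operatorname{trace}(A^{2}B^{2})$; with $C=A^{1/2}BA^{1/2}$ one has $\operatorname{trace}((AB)^{2})=\operatorname{trace}(C^{2})$ and $ACA^{-1}C=A^{3/2}B^{2}A^{1/2}$, whence $\operatorname{trace}(ACA^{-1}C)=\operatorname{trace}(A^{2}B^{2})$; and in the eigenbasis of $A$ the identity $\operatorname{trace}(ACA^{-1}C)=\sum_{i,k}(a_i/a_k)C_{ik}^{2}$ together with $\operatorname{trace}(C^{2})=\sum_{i,k}C_{ik}^{2}\ge 0$ gives the termwise bound. (A quick sanity check with $A=\operatorname{diag}(1,t)$ and $B$ the off-diagonal permutation matrix confirms both sides: $1+t^{2}$ versus $2t$, consistent with your constant $t$.) So you have proved the strictly stronger inequality with constant $\lambda_{max}/\lambda_{min}$ in place of $n(\lambda_{max}/\lambda_{min})^{2}$, under the hypothesis, stated in the lemma, that $B$ is symmetric; the weaker constant in the cited sources presumably reflects a more general setting, and in any case the only consequence the paper extracts is nonnegativity of $\operatorname{trace}((AB)^{2})$, for which either constant suffices. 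The one thing you should make explicit if this were written out in full is that the symmetry of $B$ is genuinely used twice (once for $(AB)^{T}=BA$, once for the symmetry of $C$), which matters because in the paper's application $B=N_4+N_3^{-1}N_2$ and the symmetry of $N_3^{-1}N_2$ is only asserted ``by direct calculations.''
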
 
Now we show 
\begin{lemma}
	$F^i_j+ML^i_j=(AB)_{ij}$ where $A$, $B$ satisfy the conditions of the above Lemma.
\end{lemma}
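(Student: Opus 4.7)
The plan is to produce explicit formulas for $A$ and $B$ and then verify the product identity by direct expansion. The natural choice for $A$ is the matrix arising from the linearization of the quasilinear operator in \eqref{1.6}:
\[
A_{ik}:=Q^{p-4}\bigl[Q^2 g_{ik}+(p-2)\beta^{2}v_i v_k\bigr].
\]
This is manifestly symmetric. Splitting $T\mathbb{S}^n$ into $\mathrm{span}(\nabla v)$ and its orthogonal complement, one checks that $A$ has eigenvalue $Q^{p-2}$ with multiplicity $n-1$ and eigenvalue $Q^{p-4}\bigl(\alpha^2 v^2+(p-1)\beta^2|\nabla v|^2\bigr)$ with multiplicity one. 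For $p>1$ and $v>0$ both are strictly positive and bounded on $\mathbb{S}^n$, so $A$ is symmetric positive definite and the ratio $\lambda_{\max}/\lambda_{\min}$ required by the AKM--CM lemma is finite.

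For $B$ I would take an ansatz of the form
\[
B_{kj}:=v_{kj}+\mu\, g_{kj}+\nu\, v_k v_j,
\]
where $\mu,\nu$ are scalars (allowed to depend on $v$, $|\nabla v|$, and $X^\ell_\ell$). Each of the three summands is symmetric in $k,j$, so $B$ is symmetric by construction. Using the expansion
\[
X^i_j=A_{ik}v_{kj}+(p-2)Q^{p-4}\alpha^2 v\, v_i v_j
\]
obtained directly from \eqref{2.6}, together with the identity $A_{ik}v_k=Q^{p-4}\bigl(\alpha^2 v^2+(p-1)\beta^2|\nabla v|^2\bigr)v_i$, the product $A_{ik}B_{kj}$ decomposes into four independent tensor pieces proportional to $v_{ij}$, $v_i v_\ell v_{\ell j}$, $g_{ij}$, and $v_i v_j$. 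Matching these against
\[
F^i_j+ML^i_j=X^i_j+\Bigl[(\varepsilon-\tfrac{1}{n})X^\ell_\ell-\tfrac{MQ^{p-2}|\nabla v|^2}{nv}\Bigr]g_{ij}+\tfrac{MQ^{p-2}}{v}v_i v_j
\]
gives two scalar linear equations: the $g_{ij}$-coefficient fixes $\mu$ uniquely in terms of $X^\ell_\ell, Q, v, \varepsilon, M$, and once $\mu$ is known the $v_i v_j$-coefficient fixes $\nu$ uniquely. Solvability of the second equation is guaranteed by the positivity of $\alpha^2 v^2+(p-1)\beta^2|\nabla v|^2$, the same quantity that certifies $A>0$. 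The matching of the $v_{ij}$ and $v_i v_\ell v_{\ell j}$ tensor structures is automatic from the first term of $A_{ik}B_{kj}$.

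The main obstacle is really just the algebraic bookkeeping: the asymmetric-in-$(i,j)$ contribution $(p-2)Q^{p-4}\beta^2 v_i v_\ell v_{\ell j}$ hidden inside $X^i_j$ is precisely the reason a plain symmetric-square decomposition cannot work and one is forced into the $AB$ form; one has to be careful to track that this term is produced exactly by the rank-one off-diagonal $(p-2)\beta^2 v_i v_k$ part of $A$ contracted with $v_{kj}$ in $B$. Once $\mu$ and $\nu$ are solved for, the identity $A_{ik}B_{kj}=F^i_j+ML^i_j$ is a direct algebraic check, and both symmetry of $B$ and symmetry and positivity of $A$ are immediate from the construction, so all hypotheses of the preceding lemma are met. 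Combined with \eqref{3.9}, the AKM--CM inequality then forces $AB\equiv 0$, hence $B\equiv 0$, from which one extracts $\nabla v\equiv 0$ as required for Theorem~\ref{Them1}.
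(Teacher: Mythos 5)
Your construction is correct and is essentially the paper's own argument: the paper factors $F^i_j+ML^i_j=N_3\bigl(N_4+N_3^{-1}N_2\bigr)$ with $(N_3)_{ij}=\delta_{ij}+(p-2)\tfrac{\beta^2 v_iv_j}{\alpha^2v^2+\beta^2|\nabla v|^2}$, which is exactly your $A$ up to the scalar factor $Q^{p-2}$ (which the paper places in $N_4$ instead), and your undetermined-coefficient ansatz $B_{kj}=v_{kj}+\mu g_{kj}+\nu v_kv_j$ reproduces the paper's $N_4+N_3^{-1}N_2$. The eigenvalue computation, the positivity for $p>1$, and the symmetry check all match the paper's proof.
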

	\begin{proof}From the definition of $F^i_j, \ L^i_j$ in the beginning of section 2, we have
		\begin{align*}
			F^i_j+ML^i_j&=(Q^{p-2}v_i)_j+\left(\varepsilon-\frac{1}{n}\right)X^l_lg_{ij}+M\frac{v_iv_j}{v}Q^{p-2}-\frac{M|\nabla v|^2}{nv}Q^{p-2}g_{ij}\\&=Q^{p-4}[(p-2)\beta^2v_lv_jv_{il}+(\alpha^2v^2+\beta^2|\nabla v|^2)v_{ij}]\\&+(p-2)Q^{p-4}\alpha^2vv_iv_j+\left(\varepsilon-\frac{1}{n}\right)X^l_lg_{ij}+M\frac{v_iv_j}{v}Q^{p-2}-\frac{M|\nabla v|^2}{nv}Q^{p-2}g_{ij}\\&=(N_1+N_2)_{ij},		\end{align*}
	where $(N_1)_{ij}=Q^{p-4}[(p-2)\beta^2v_lv_jv_{il}+(\alpha^2v^2+\beta^2|\nabla v|^2)v_{ij}]$.

	We rewrite
	\begin{align*}
		N_1=N_3N_4,
	\end{align*}
	where $(N_4)_{ij}=Q^{p-2}v_{ij}$, $(N_3)_{ij}=(p-2)\frac{\beta^2|\nabla v|^2}{\alpha^2 v^2+\beta^2 |\nabla v|^2}\frac{v_iv_j}{|\nabla v|^2}+\delta_{ij}$, $N_3$ is positive define with eigenvalues $1$ and $1+(p-2)\frac{\beta^2|\nabla v|^2}{\alpha^2v^2+\beta^2|\nabla v|^2}$.  From basic linear algebra we  have 
	\begin{align*}
		(N^{-1}_3)_{ij}=\delta_{ij}-(p-2)\frac{\beta^2|\nabla v|^2}{\alpha^2v^2+(p-1)\beta^2|\nabla v|^2}\frac{v_iv_j}{|\nabla v|^2}.
	\end{align*}
	Then
	\begin{align*}
		N_1+N_2=N_3(N_4+N_3^{-1}N_2).
	\end{align*}
	By direct calculations, $N^{-1}_3N_2$ is also a symmetric matrix.
	
	 Setting $A=N_3$, $B=N_4+N^{-1}_3N_2$ and we have done.	\end{proof}
 Now we prove the following last lemma.
	 \begin{lemma}
	 	$|\nabla v|=0$
	 \end{lemma}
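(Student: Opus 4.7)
The plan is to promote the vanishing of the integral in \eqref{3.9} to a pointwise matrix identity, then extract trace information, and finally evaluate the reduced form of \eqref{2.3} at extrema of $v$. For the first step, I would apply the lemma cited from \cite{AKM18,CM18} to the factorization $F^i_j+ML^i_j=(AB)_{ij}$ produced in the previous lemma. Since the integrand in \eqref{3.9} equals $v^a\operatorname{tr}\bigl((AB)^{2}\bigr)$, the cited inequality
\[\operatorname{tr}\bigl(AB(AB)^{T}\bigr)\le n\bigl(\lambda_{\max}/\lambda_{\min}\bigr)^{2}\operatorname{tr}\bigl((AB)^{2}\bigr),\]
combined with the non-negativity of the Frobenius norm $|AB|_F^{2}=\operatorname{tr}(AB(AB)^{T})$ and the uniform boundedness of the condition number of $A=N_3$ (whose eigenvalues are $1$ and $1+(p-2)\beta^{2}|\nabla v|^{2}/Q^{2}$, both strictly positive and uniformly bounded when $p>1$), gives $\operatorname{tr}((AB)^{2})\ge c\,|AB|_F^{2}\ge 0$ for some positive constant $c$. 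Since $v^a>0$ and $\int v^a\operatorname{tr}((AB)^{2})=0$, this forces $F^i_j+ML^i_j\equiv 0$ pointwise on $\mathbb{S}^n$.

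With the pointwise identity in hand, taking the trace and using $\operatorname{tr}(L)=0$ together with $\operatorname{tr}(F)=n\varepsilon X^l_l$ yields $n\varepsilon X^l_l=0$. The value of $\varepsilon$ fixed in \eqref{3.3} is strictly positive: $n-\lambda(q+1-p)>0$ by the earlier lemma, while $\lambda(q+1-p)>0$ from $\lambda_{p,q}>0$ together with $q>p-1$. Hence $X^l_l\equiv 0$ everywhere, and substituting this into \eqref{2.3} produces the pointwise reduction
\[\beta^{-1}\lambda Q^{p-2}v-\beta^{-1}v^k=(\beta+1)(p-1)v^{-1}|\nabla v|^{2}Q^{p-2}.\]

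Finally, since $v$ is smooth and positive on the compact sphere, it attains a maximum at some $x_{\max}$ and a minimum at some $x_{\min}$, at both of which $|\nabla v|=0$ and $Q=\alpha v$. Evaluating the displayed equation at either point collapses it to $v^{k-p+1}=\lambda\alpha^{p-2}$; since $k-p+1=\beta_0(p-1-q)\ne 0$ (because $g(0)=-(p-1)^{2}\ne 0$ forces $\beta_0\ne 0$, and $q>p-1$ by hypothesis), this uniquely determines the value of $v$ at every critical point, so $v(x_{\max})=v(x_{\min})$ and $v$ is constant. Hence $|\nabla v|\equiv 0$. I expect the main obstacle to be the first step: one must simultaneously exploit the algebraic factorization $F+ML=AB$ with $A$ symmetric positive definite and $B$ symmetric produced in the previous lemma, and the spectral lemma of \cite{AKM18,CM18}, in order to upgrade an integral vanishing to a pointwise vanishing; after that reduction the proof is purely algebraic, combining trace-taking with the maximum-principle evaluation at extremal points.
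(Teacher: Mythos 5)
Your proposal is correct, and its first two steps coincide with the paper's: you upgrade the vanishing of the integral in \eqref{3.9} to the pointwise identity $F^i_j+ML^i_j=0$ via the factorization $F+ML=AB$ and the spectral lemma from \cite{AKM18,CM18}, and then take the trace (using that $E$ and $L$ are trace free and $\varepsilon\neq0$) to get $X^l_l\equiv0$. Where you genuinely diverge is in the final step. The paper integrates by parts once more, writing
\[
0=\int X^i_iX^j_j=\frac{n}{n-1}\int E^i_jE^j_i+\frac{n}{n-1}\int R_{ij}v_iv_jQ^{2p-4},
\]
and then invokes $\int E^i_jE^j_i\ge0$ (by the same $AB$-factorization device) together with $R_{ij}=(n-1)g_{ij}$ to force $\int|\nabla v|^2Q^{2p-4}=0$. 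You instead substitute $X^l_l\equiv0$ back into the pointwise equation \eqref{2.3} and evaluate at the extrema of $v$, where $|\nabla v|=0$ and $Q=\alpha v$, obtaining $v^{k-p+1}=\lambda\alpha^{p-2}$; since $k-p+1=\beta_0(p-1-q)\neq0$ (using $q>p-1$ and $g(0)=-(p-1)^2\neq0$, so $\beta_0\neq0$) and $\lambda\alpha^{p-2}>0$, the value of $v$ at every critical point is the same, whence $v(x_{\max})=v(x_{\min})$. Both arguments are valid. The paper's Bochner-type route re-uses the positivity of the Ricci curvature, which is precisely what makes the remark about general closed manifolds with $Ric_{ij}\ge(n-1)g_{ij}$ go through; your extremum argument is more elementary (no second integration by parts, no need to re-establish the sign of $\int E^i_jE^j_i$), relies instead on the sign conditions $\lambda_{p,q}>0$ and $q>p-1$, and has the small bonus of identifying the constant value of $v$, hence of $\omega=v^{-\beta}$, explicitly.
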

	\begin{proof}
		By Lemma 3.5, Lemma 3.6, Proposition 3.4, we have
		\begin{align*}
			F^{i}_j+ML^i_j=0,
		\end{align*}
		which is
		\begin{align*}
			E^i_j+\varepsilon X^l_l g_{ij}+ML^i_j=0.
		\end{align*}
	
By taking trace we have 
\begin{align*}
	X^l_l=0.
\end{align*}
Then
\begin{align*}
	0=\int X^i_iX^j_j=\frac{n}{n-1}\int E^i_jE^j_i+\frac{n}{n-1}\int R_{ij}v_iv_jQ^{2p-4}.
\end{align*}
Following the method of Lemma 3.6, one can show that $\int E^i_jE^j_i\geq 0$, 
it forces that $|\nabla v|=0$.  Then we get $v$ is constant and complete the proof of Theorem\ref{Them1}.
\end{proof}
{\bf Acknowledgements:}
The second author was supported by  National Natural Science Foundation of China (grants 11721101 and 12141105) and National Key Research and Development Project (grants SQ2020YFA070080).

\end{document}